\newtheoremstyle{mytheoremstyle}{0pt}{0pt}{\itshape}{}{\bf}{.}{.5em}{}
\theoremstyle{mytheoremstyle}
\newtheorem{proposition}{Proposition}
\newtheorem{corollary}{Corollary}
\newtheorem{lemma}{Lemma}
\newtheorem{assumption}{Assumption}
\theoremstyle{definition}
\theoremstyle{remark}
\DeclareMathOperator*{\minimize}{minimize}
\begin{document}
%
\title{NERO: Nested Rebalancing Optimization for Mobility on Demand}
\author{Tomoki Nishi, Satoshi Koide, Keisuke Otaki and Ayano Okoso\\Toyota Central R\&D Labs., Inc. \\Nagakute, Aichi, Japan \\ \{nishi,koide,otaki,okoso\}@mosk.tytlabs.co.jp}

\maketitle
\begin{abstract}
Mobility-on-Demand (MoD) services, such as taxi-like services, are promising applications. Rebalancing the vehicle locations against customer requests is a key challenge in the services because imbalance between the two worsens service quality (e.g., longer waiting times). Previous work would be hard to apply to large-scale MoD services because of the computational complexity. In this study, we develop a scalable approach to optimize rebalancing policy in stages from coarse regions to fine regions hierarchically. We prove that the complexity of our method decreases exponentially with increasing number of layers, while the error is bounded. We numerically confirmed that the method reduces computational time by increasing layers with a little extra travel time using a real-world taxi trip dataset.
\end{abstract}

\section{Introduction}
\noindent \paragraph{Background and Motivation}
Transport-related social problems (e.g., traffic jam) are expected to worsen owing to recent spurts in urbanization. According to estimates by United Nation~(\citeyear{UN2014}), worldwide, the population living in urban areas will increase to around 60\% by 2050. Mobility as a Service, proposed by Hietanen~(\citeyear{Hietanen2014}), is a promising concept for mitigating severe social problems related to transportation. However, Gehrke (\citeyear{Gehrke2018}) reported that Mobility-on-Demand (MoD) services, such as taxi-like services, might worsen the problem because many users of public transportation with large capacity (e.g., buses), will shift to more convenient vehicles with smaller capacity for MoD services. More efficient MoD services are essential to cope with such problems.

A key challenge associated with MoD services is rebalancing idle vehicles to service demand. Some studies have reported that rebalancing considerably improves the rate of requests serviced and customer waiting times~\cite{Spieser2016,Alonso-Mora2017b,Pavone2011}. Iglesias et al.~(\citeyear{Iglesias2018}) formulate the optimal rebalancing problem (\emph{ORP}) as an integer linear programming (ILP) problem using a time-varying network flow model. Their formulation can be solved efficiently due to totally unimodularity of the problem. However, it would be still difficult to incorporate the approach into city-wide services covered a large area with many regions because the computational complexity increases in proportion to the 3.5th power of the number of regions, even when a widely-known LP solver is used~\cite{Vaidya}. One might wonder if the optimization using a coarse mesh size is sufficient to compute the solution quickly; however, we argue that this is not true. In real MoD services, a coarse mesh size declines the service utility because, with such a coarser mesh, the distance from a user and the dispatched vehicle gets longer, leading to longer waiting time.

\paragraph{Statement of contributions} This study aims to develop a scalable method that enables us to rebalance vehicle locations against customer request locations across a large number of regions in MoD services. We propose an approximated, yet highly-scalable method to optimize the rebalancing policy, which is called NEsted Rebalancing Optimization (\emph{NERO}). The key idea is to hierarchically optimize the policy of finer regions at a lower layer using the policy of its upper layer, which is optimized just before, as the constraints.

We determine the computational complexity of NERO for $K$ layers under certain assumptions. To this end, we use the Vaidya method as an LP solver and consider that each region at any layer has constant $M$ nested regions (i.e., children) at its lower layer. We derive the complexity of our method is $M^{3.5(K-1)}$ times smaller than that of a single-layered method.

Moreover, we compute the upper bound of the error in our approach against the single-layered one. We prove that the increase per trip is at most $\frac{M}{(M-1)}\tau$, where $\tau$ is the time to travel the mesh size of regions at the top layer. We also demonstrate that our method with three layers achieves the 98\% shorter computational time than the single-layered method by using a little extra travel time for the rebalancing in numerical experiments with a real-world taxi trip dataset.

\section{Related Work}

Recently, several methods have been developed for solving ORPs. Pavone et al.~(\citeyear{Pavone2011}) formulated an ORP by using a fluid model. They showed that the rebalancing policy could be computed as a solution to an LP problem. Zhang and Pavone~(\citeyear{Zhang2016}) developed a discrete-time model with model predictive control. Iglesias et al.~(\citeyear{Iglesias2018}) formulated the problem as an ILP problem with a time-expanded network. The computational complexities of these approaches increase in a polynomial fashion with the number of regions but is constant with respect to the number of customers and fleet size. Therefore, the incorporation of these methods into real city-wide services that cover many regions would be difficult.

ORPs~\cite{Pavone2011} are related to the following problems: \textit{dynamic trip-vehicle assignment problem}~\cite{Bei2018,Dickerson2018,Pelzer2015a} and \textit{dynamic pickup and delivery problem}~\cite{Berbeglia2010,Parragh2008}. The objective of the problems is to find the optimal vehicle assignment given a set of demands. The ORPs optimize the number of vehicles that travels from one region to another to satisfy demand as opposed to finding the optimal assignment of each vehicle to a request. Therefore, the computational complexity of the methods used to solve the individual problems grows exponentially with increasing number of vehicles and demands, while the complexity of the methods used to solve the ORP remains constant, regardless of the number of vehicles and demands.

Ghosh et al.~(\citeyear{Ghosh2017}) formulated a MIP problem to optimize a profit-maximizing repositioning and routing in bike sharing systems, called Dynamic Repositioning and Routing Problem (DRRP). The method solves the abstract DRRP, where areas are grouped, and then fixes the solution for the original DRRP. Our method can be a generalization of their method, which abstracts stations once, to the method with the multi-layer abstraction. Moreover, we provide the complexity and an upper bound of the extra travel time for the rebalancing theoretically.

A formulation as a spatio-temporal matching problem between demands and suppliers has developed~\cite{Lowalekar2018}. The method also has reduced the computational complexity by abstracting suppliers to zones while the computational complexity of most matching algorithm increases in proportion to the number of the suppliers and the customers. The complexity, however, still increases in proportion to the number of demands due to optimal assignment between the demands and the zones. Moreover, their method cannot optimize the assignment against the long horizon because of matching between requests and suppliers at current and next steps. Our method optimizes the number of vehicles from origins to destinations during multi-time steps.

\section{Preliminaries}

In this section, we introduce the optimal rebalancing problem proposed by Iglesias~\citeyear{Iglesias2018}, which is a problem to optimize the number of vehicle trips to minimize travel cost for the rebalancing. To fit our purpose, we slightly modify their original formulation. The difference and the reason are explained later. Table~\ref{tab:notation} summarizes notation used in this paper.

\subsection{Optimal Rebalancing Problems (ORPs)}

\begin{table}[t]
    \centering
    \caption{\small Notation} \label{tab:notation}
    \small
    \begin{tabular}{ll} \toprule
    \textbf{Symbol} & \textbf{Description} \\ \midrule \midrule
    $\mathcal{N}$       & A set of the $N$ target regions\\
    $\mathcal{N}_{l}$   & A set of regions within region $l$ in its lower layer \\
    $\tau_{ij},\, c_{ij}$ & Travel time and cost between two region, $i$ and $j$ \\
    $\lambda_{ijt}$ & \# of requests from $i$ to $j$ departing at $t\in\mathcal{T}$ \\
    $s_{it}$ & \# of available vehicles in $i$ at $t$ \\
    $x_{ijt}^p$ & \# of transporting vehicles from $i$ to $j$ departing at $t\in\mathcal{T}$\\
    $x_{ijt}^r$ & \# of rebalancing vehicles from $i$ to $j$ departing at  $t\in\mathcal{T}$\\
    $x_{it}^a$ & \# of vehicles arriving into $i$ at $t\in\mathcal{T}$ \\
    $x_{it}^d$ & \# of vehicles departing from $i$ at $t\in\mathcal{T}$ \\
    $K$ & \# of layers \\
    $M$ & \# of children of a region at each layer\\
    $T$ & Length of time intervals\\
    $L$ & Bit size of LP operations with certain precision \\
    \bottomrule
    \end{tabular}
    \vspace{-3mm}
\end{table}

We describe a formulation of ORPs with the time-varying network flow model proposed by Iglesias et al.~(\citeyear{Iglesias2018}). We consider an area $\mathcal{N}$, which is segmented into $N$ regions, that is, $\mathcal{N} \coloneqq \{1,\dots,N\}$. We define $\mathcal{T} \coloneqq [1,\dots, T]$ as an ordered list of length $T$ time intervals. We denote the set of travel time and travel cost from region $i \in \mathcal{N}$ to region $j \in \mathcal{N}$ as $\boldsymbol{\tau}\coloneqq \{ \tau_{ij} \in \mathbb{N}\}_{i,j\in \mathcal{N}}$ and $\mathbf{c}\coloneqq \{c_{ij}\}_{i,j\in \mathcal{N}}$. We represent the set of the number of customer requests to travel from region $i \in \mathcal{N}$ to region $j \in \mathcal{N}$ departing at time $t \in \mathcal{T}$ as $\boldsymbol{\lambda}\coloneqq \{\lambda_{ijt}\}_{i,j\in \mathcal{N},t\in \mathcal{T}}$. Let $\mathbf{x}^p \coloneqq \{x_{ijt}^p\}_{i,j\in \mathcal{N},t\in \mathcal{T}}$ and $\mathbf{x}^r \coloneqq \{x_{ijt}^r\}_{i,j\in \mathcal{N},t\in \mathcal{T}}$ represent the sets of the number of vehicles transporting customers and of the number of rebalancing vehicles from region $i \in \mathcal{N}$ to region $j \in \mathcal{N}$ departing at $t \in \mathcal{T}$, respectively. Let $\mathbf{s}_0 \coloneqq \{s_{i0}\}$ be the number of vehicles available in region $i \in \mathcal{N}$ at time $t = 0$.

ORPs are formulated as an ILP problem under the assumptions of given customer requests $\boldsymbol{\lambda}$ and free starting positions of the vehicles, as follows.
\begin{subequations}
    \begin{align}
        & \minimize_{\mathbf{x}^p,\mathbf{x}^r,\mathbf{s}_0} \sum_{t\in\mathcal{T}} \sum_{i\in\mathcal{N}} \sum_{j\in\mathcal{N}} c_{ij}x^r_{ijt} , ~~\textrm{subject~to}, \label{eq:obj-2} \\
        &x^p_{ijt} = \lambda_{ijt}, ~ {i,j} \in \mathcal{N}, t \in \mathcal{T}, \label{eq:cst-21}\\
        & \sum_{j \in \mathcal{N}}\left(x^r_{ijt}+x^p_{ijt}-x^r_{jit-\tau_{ji}}-x^p_{jit-\tau_{ji}}\right)\nonumber\\
        &\qquad\qquad\qquad = \begin{cases}
            s_{i0} & t = 1 \\
            0  & t > 1 \\
        \end{cases}
        , ~ \forall i \in \mathcal{N}, t \in \mathcal{T}, \label{eq:cst-23}\\
        & \sum_{i\in\mathcal{N}}s_{i0} = V, \label{eq:cst-24}\\
        & x_{ijt}^r, x_{ijt}^p, s_{i0} \in \mathbb{N}, ~ \forall i,j \in \mathcal{N}, t\in\mathcal{T},
    \end{align}
\end{subequations}
where $V$ denotes fleet size given externally. The values of $x_{ijt-\tau_{ji}}^r$ and $x_{ijt-\tau_{ji}}^p$ should be 0 if $t < \tau_{ji}$ because no vehicles depart before time $t=0$. The constraints~\eqref{eq:cst-21} ensures that all customer demands are served. The constraints~\eqref{eq:cst-23} enforces that the number of vehicles arriving equals the number of vehicles departing in each time interval and each region, and vehicles are inserted in the first time interval only.

Here we note the difference of our formulation from previous studies. In the formulation proposed by Iglesias~(\citeyear{Iglesias2018}), the fleet size is also optimized as a result of the balance between the staying costs $\{c_{ii}\}_{i\in\mathcal{N}}$ and the travel costs $\{c_{ij}\}_{i,j\neq i \in \mathcal{N}}$. The number of available vehicles, however, is usually given by MoD services providers and is not so elastic in a short time horizon. Therefore, in the present work, we add the constraints~\eqref{eq:cst-24} to the formulation by Iglesias et al.~(\citeyear{Iglesias2018}) to fix the fleet size. We refer to the method to solve ORPs by using an LP solver as Single-layered Rebalancing Optimization (\emph{SRO}) method hereinafter.  Their formulation is an offline setting and needs that customer demand is given. They relaxed the assumption using model predictive control and the model to predict future customer demand and updating the policy at fixed interval based on model predictive control. The problem can be solved efficiently with an LP solver because it is straightforwardly proven that the problem is also totally unimodular by the theorem~\cite{ghouila1962}.

\subsection{Computational complexity}

In this section, we derive the computational complexity of SRO. The optimal rebalancing policy is obtained by solving an ORP with an LP solver once.
\begin{lemma} \label{prop:complexity_orp1}
Let $C(m,n)$ be the number of arithmetic operations of an LP solver with $m$ constraints and $n$ decision variables. The computational complexity of the SRO is $O(C(NT+1, N^2T+N))$.
\end{lemma}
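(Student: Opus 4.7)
The plan is a direct counting argument: determine the number of decision variables and constraints that an LP solver actually receives for a single instance of the formulation~\eqref{eq:obj-2}--\eqref{eq:cst-24}, then plug those counts into the stated solver cost $C(m,n)$.

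First I would use the remark preceding the lemma: because the constraint matrix is totally unimodular, the integer program can be solved by a single LP relaxation, so the whole cost of SRO is one call to the LP solver on the reduced program. Hence the lemma reduces to sizing that program.

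Next I would eliminate $\mathbf{x}^p$ using \eqref{eq:cst-21}, which fixes $x^p_{ijt}=\lambda_{ijt}$ for every $i,j\in\mathcal{N}$ and $t\in\mathcal{T}$. Substituting these values into the objective~\eqref{eq:obj-2} (which does not actually involve $\mathbf{x}^p$) and into the flow-balance equations~\eqref{eq:cst-23} leaves only $\{x^r_{ijt}\}_{i,j\in\mathcal{N},t\in\mathcal{T}}$ and $\{s_{i0}\}_{i\in\mathcal{N}}$ as free decision variables, for a total of $N^2T+N$ variables, matching the second argument of $C$. The remaining constraints are \eqref{eq:cst-23}, which contributes one linear equation for each pair $(i,t)\in\mathcal{N}\times\mathcal{T}$ and so $NT$ equations, together with the single fleet-size constraint~\eqref{eq:cst-24}, giving $NT+1$ constraints in total; this matches the first argument of $C$. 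Applying the hypothesized LP solver cost yields $O(C(NT+1,\,N^2T+N))$.

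There is no real obstacle here beyond careful bookkeeping; the only subtlety worth flagging is justifying that~\eqref{eq:cst-21} may be removed in this way, i.e.\ that the $N^2T$ fixed-value variables and their defining equalities need not be handed to the solver. Since the non-negativity and integrality of $\lambda_{ijt}$ make the substituted system feasible exactly when the original is, and since the objective is unchanged by this substitution, the reduction is valid and the lemma follows.
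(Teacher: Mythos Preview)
Your proposal is correct and is exactly the counting argument the paper has in mind; in fact the paper states the lemma without an explicit proof, so your derivation---eliminating $\mathbf{x}^p$ via \eqref{eq:cst-21} to leave the $N^2T$ rebalancing variables $x^r_{ijt}$ and the $N$ initial-supply variables $s_{i0}$, together with the $NT$ flow-balance equations \eqref{eq:cst-23} and the single fleet-size equation \eqref{eq:cst-24}---is the intended justification.
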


Suppose we employ the interior point method proposed by Vaidya~(\citeyear{Vaidya}), which requires $C(m,n)=O(m^{1.5}nL)$ arithmetic operations. Then, we obtain the computational complexity of SRO as follows.
\begin{corollary} \label{coro:complexity_orp1}
The computational complexity of SRO with the Vaidya method is $O(N^{3.5}T^{2.5}L)$, where the parameter $L$ is the bit size required to realize each operation.
\end{corollary}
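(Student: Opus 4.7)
The plan is to apply Lemma~\ref{prop:complexity_orp1} directly, using the stated arithmetic-operation count of the Vaidya interior point method and simplifying the resulting expression asymptotically. Since Lemma~\ref{prop:complexity_orp1} already identifies the LP sizes $m = NT+1$ and $n = N^{2}T + N$, and since the Vaidya bound $C(m,n) = O(m^{1.5} n L)$ is given as an assumption, the statement reduces to a big-$O$ calculation; no new structural argument about the ORP is required.

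Concretely, I would first substitute $m = NT + 1$ and note that $m^{1.5} = O((NT)^{1.5}) = O(N^{1.5} T^{1.5})$, absorbing the additive constant into the asymptotic. Next I would substitute $n = N^{2} T + N$ and factor out the dominant term, obtaining $n = O(N^{2} T)$ (the $N$ term is dominated whenever $NT \ge 1$). Multiplying the two simplified factors with the $L$ from the Vaidya bound gives
\begin{equation*}
C(NT+1,\, N^{2}T + N) \;=\; O\bigl(N^{1.5} T^{1.5} \cdot N^{2} T \cdot L\bigr) \;=\; O(N^{3.5} T^{2.5} L),
\end{equation*}
which is the claimed bound.

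The only thing to be slightly careful about is the interpretation of the $L$ factor: it is the bit size needed to represent the LP data and intermediate arithmetic to the required precision, and in the ORP it depends polylogarithmically on the problem parameters (in particular on $V$, on the maximum $\lambda_{ijt}$, and on the travel-cost magnitudes). I would therefore explicitly flag that $L$ is treated as an input parameter inherited from the Vaidya analysis rather than expanded further, matching the convention already used in the paper. There is no substantive obstacle in this corollary; the work is entirely bookkeeping, and the only way to get it wrong would be to mishandle the $+1$ in $m$ or the additive $N$ in $n$, both of which are immediately absorbed by the big-$O$.
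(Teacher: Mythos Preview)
Your proposal is correct and matches the paper's approach exactly: the corollary is stated without a written proof, being immediate from substituting Vaidya's bound $C(m,n)=O(m^{1.5}nL)$ into the sizes $m=NT+1$ and $n=N^{2}T+N$ supplied by Lemma~\ref{prop:complexity_orp1} and simplifying asymptotically. Your handling of the lower-order terms and the remark on $L$ are appropriate and, if anything, more explicit than what the paper records.
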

The corollary represents that the complexity increases in proportion to the 3.5th power of the number of regions. The increase would be critical in large-scale MoD services, in which vehicle scheduling are optimized in real time.

\section{Proposed Method}

In this section, we propose a scalable method, called NERO, for ORPs. NERO optimizes the policies using tree-shaped sets of regions (Fig.~\ref{fig:NERO-overview}). The method computes the policy for coarse regions at the upper layer first and then uses the solution to guide the optimization for the finer regions of its lower layer. Before introducing NERO, we first extend the ORP formulation to treat time-varying fleet size because the fleet size in lower layers depends on the policy optimized in upper layers. Second, we introduce an algorithm to find the rebalancing policy by using NERO.

\subsection{ORP with Time-varying Fleet Size}

\paragraph{Problem formulation}
We consider a set of the regions $\mathcal{N}$, which is a subarea segmented with any manner in the whole area $\mathcal{N}_0$. We introduce two additional sets of decision variables $\mathbf{x}^a\coloneqq \{x_{it}^a\}_{i\in \mathcal{N},t\in \mathcal{T}}$ and $\mathbf{x}^d\coloneqq \{x_{it}^d\}_{i\in \mathcal{N},t\in \mathcal{T}}$ to represent the number of empty vehicle arrivals into region $i$ from outside area of $\mathcal{N}$ at time $t$ and the number of vehicle departures from region $i$ to outside area of $\mathcal{N}$ at time $t$, respectively. We represent the number of vehicles at time $t$, which is given externally, as $V_t$. We formulate the ORP with a time-varying fleet size as follows.
\begin{subequations}
    \begin{align}
        & \minimize_{\mathbf{x}^p,\mathbf{x}^r,\mathbf{x}^a,\mathbf{x}^d} \sum_{t\in\mathcal{T}} \sum_{i\in\mathcal{N}} \sum_{j\in\mathcal{N}} c_{ij}x^r_{ijt} , ~~\textrm{subject~to},\label{eq:obj-3}\\
        & x^p_{ijt} = \lambda_{ijt}, ~ {i,j} \in \mathcal{N}, t \in \mathcal{T}, \label{eq:cst-31}
    \end{align}
    \begin{align}
        & \sum_{j \in \mathcal{N}}\left(x^r_{ijt}+x^p_{ijt}-x^r_{jit-\tau_{ji}}-x^p_{jit-\tau_{ji}}\right) = s_{it},\nonumber\\
        &\qquad\qquad\qquad\qquad\qquad\qquad\quad\forall i \in \mathcal{N}, t \in \mathcal{T}, \label{eq:cst-32}\\
        & s_{it} =
        \begin{cases}
            s_{i0} & t = 1 \\
            x^a_{it}-x^d_{it} & t > 1 \\
        \end{cases}
        , ~\forall i \in \mathcal{N}, t \in \mathcal{T}, \label{eq:cst-33}\\
        & \sum_{i\in\mathcal{N}}\left(s_{it} + \sum_{j\in\mathcal{N}}\sum_{\Delta t=1}^{\tau_{ji}} \left(x^r_{jit-\Delta t}+x^p_{jit-\Delta t}\right)\right) = V_{t}, \nonumber\\
        &\qquad\qquad\qquad\qquad\qquad\qquad\quad\qquad\quad \forall t \in \mathcal{T}, \label{eq:cst-34}\\
        & \sum_{i \in \mathcal{N}} x^a_{it} =
        \begin{cases}
         V_{t}-V_{t-1} & V_{t} \leq V_{t-1} \\
         0  & \textrm{otherwise}
        \end{cases}
        ~,\forall t \in \mathcal{T}, \label{eq:cst-35} \\
        & \sum_{i \in \mathcal{N}} x^d_{it} =
        \begin{cases}
         0 & V_{t} \leq V_{t-1} \\
         V_{t-1}-V_{t}  & \textrm{otherwise}
        \end{cases}
        ~,\forall t \in \mathcal{T}, \label{eq:cst-36} \\
        & x_{ijt}^r, x_{ijt}^p, x_{it}^a, x_{it}^d, s_{i0} \in \mathbb{N}, ~ \forall i,j \in \mathcal{N}, t\in\mathcal{T},
    \end{align}
\end{subequations}
where let $V_0$ be 0. The differences compared to the ORP formulation are as follows.

\begin{figure}[t]
 \centering
 \includegraphics[width=0.7\hsize]{./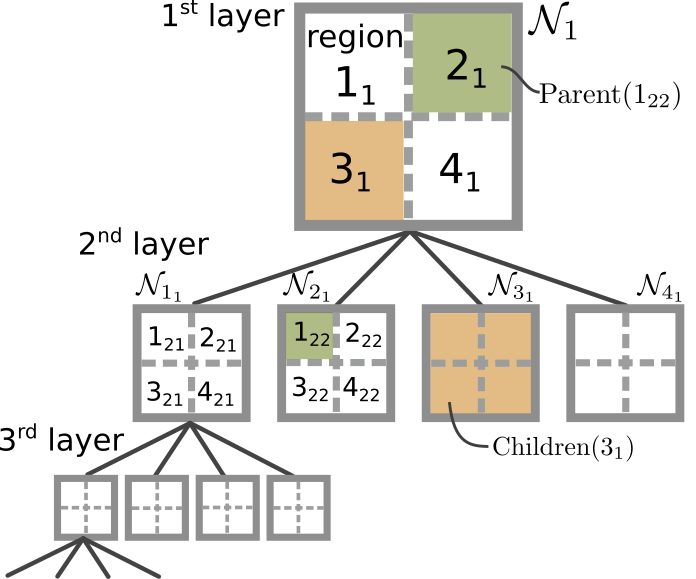}
 \caption{\small Tree-shaped sets of regions for NERO}\label{fig:NERO-overview}
 \vspace{-3 mm}
\end{figure}

\begin{itemize}\setlength{\leftskip}{0cm}\setlength{\itemsep}{0cm}\setlength{\parskip}{0cm}
    \item The conservation constraints of vehicle flow~\eqref{eq:cst-23} is replaced with the constraints~\eqref{eq:cst-32} and \eqref{eq:cst-33} to enforce that the number of vehicles arriving from inside/outside of $\mathcal{N}$ equals the number of vehicles departing the inside/outside regions in each time interval and each region.
    \item The fleet size constraints~\eqref{eq:cst-24} is replaced with the constraints~\eqref{eq:cst-34} because the size changes dynamically. The total number of vehicles within $\mathcal{N}$ can be calculated by summing up the numbers of vehicles idling at region $i \in \mathcal{N}$ and vehicles heading to region $i$.
    \item The constraints~\eqref{eq:cst-35} and ~\eqref{eq:cst-35} are added to satisfy the conservation law of the number of vehicles in the parent region of the upper layer when using the NERO method.
\end{itemize}

The problem can also be solved efficiently with an LP solver because it is straightforwardly proven that the problem is also totally unimodular by the theorem~\cite{ghouila1962}. The ORPs with a time-varying fleet size are also offline settings and need customer demand but can be extended online settings with predicted future demand by following the previous approach~\cite{Iglesias2018}.

\paragraph{Computational complexity}
We derived the computational complexity of the ORPs with time-varying fleet size using Vaidya method, which has $NT+2T$ constraints and $N^2T+2NT$ decision variables, as follows.
\begin{corollary}
The computational complexity of an ORP with time-varying fleet size using Vaidya method is $O(N^{3.5}T^{2.5}L)$.
\end{corollary}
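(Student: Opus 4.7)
The plan is to mirror the derivation that produced Corollary~1 for SRO: apply the Vaidya bound $C(m,n)=O(m^{1.5}nL)$ to the LP above, using the counts $m=NT+2T$ and $n=N^2T+2NT$ already announced in the sentence preceding the corollary. Since the paper has argued that the extended formulation is still totally unimodular, its LP relaxation is solved by Vaidya's interior-point method with the same per-operation guarantees invoked for SRO, so the only remaining task is to verify the counts and plug them in.

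First, I would verify the variable count by walking through the program. The demand-fixing equalities~\eqref{eq:cst-31} allow $\mathbf{x}^p$ to be substituted out, leaving $\mathbf{x}^r$ with $N^2T$ entries, $\mathbf{x}^a$ and $\mathbf{x}^d$ with $NT$ entries each, and the $N$ initial-occupancy variables $s_{i0}$ as lower-order terms. This yields $n=N^2T+2NT=\Theta(N^2T)$, matching the statement.

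Next, I would count the remaining constraints. Equations~\eqref{eq:cst-32}--\eqref{eq:cst-33} collapse to $NT$ flow-balance equalities once $s_{it}$ is expressed through $\mathbf{x}^a$ and $\mathbf{x}^d$; equation~\eqref{eq:cst-34} adds $T$ fleet-size equalities; and equations~\eqref{eq:cst-35}--\eqref{eq:cst-36} together contribute $T$ equalities, since at each time $t$ exactly one of the two cases is nontrivial. Summing yields $m=NT+2T=\Theta(NT)$. Substituting into the Vaidya bound then gives
\[
C(NT+2T,\,N^2T+2NT)\;=\;O\bigl((NT)^{1.5}\cdot N^2T\cdot L\bigr)\;=\;O(N^{3.5}T^{2.5}L),
\]
which is the claim.

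The derivation is essentially mechanical; the only mildly delicate part is the bookkeeping in the variable/constraint enumeration, in particular checking that the extra boundary-flow variables $\mathbf{x}^a,\mathbf{x}^d$ and the aggregate conservation equations~\eqref{eq:cst-34}--\eqref{eq:cst-36} do not inflate either count beyond the $\Theta(N^2T)$ and $\Theta(NT)$ rates that governed SRO. Because each of them contributes only $O(NT)$ additional variables and $O(T)$ additional constraints, both counts remain of the same asymptotic order as in the basic ORP, and the overall complexity coincides with that of Corollary~1.
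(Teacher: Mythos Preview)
Your proposal is correct and mirrors the paper's own derivation: the paper simply states the counts $m=NT+2T$ and $n=N^2T+2NT$ in the sentence preceding the corollary and applies Vaidya's bound $C(m,n)=O(m^{1.5}nL)$ without further argument, which is exactly the substitution you carry out. Your additional bookkeeping on why the boundary-flow variables and constraints do not change the asymptotic order is a harmless elaboration of what the paper leaves implicit.
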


\subsection{NERO: Nested Rebalancing Optimization} \label{subsec:NERO}

We introduce a scalable algorithm to find the rebalancing policy for ORPs. First, we aggregate a given set of regions in a hierarchical manner and obtain tree-shaped coarser sets of regions. After optimizing the policy for trips across coarser regions in the upper layers, NERO optimizes the policy for trips across finer regions in the lower layers using the number of vehicles in its parent region as the constraints.

We define two operators for region $l$ in a layer: $\textrm{Children}(l)$ and $\textrm{Parent}(l)$. Children$(l)$ returns $\mathcal{N}_{l}$, which is a set of regions segmented region $l$ into finer regions. Parent$(l)$ returns a parent region of region $l$ in its upper layer. The sets of travel time, travel cost, and the number of customer requests of regions in $\mathcal{N}_l$ are denoted by $\boldsymbol{\tau}_{l} \coloneqq \{\tau_{ijt}\}_{i,j\in \mathcal{N}_{l},t\in \mathcal{T}}$, $\mathbf{c}_{l}\coloneqq \{c_{ijt}\}_{i,j\in \mathcal{N}_{l},t\in \mathcal{T}}$ and $\boldsymbol{\lambda}_{l}\coloneqq \{\lambda_{ijt}\}_{i,j\in \mathcal{N}_{l},t\in \mathcal{T}_{l}}$, respectively. We denote the rebalancing policy in $\mathcal{N}_l$ as $\mathbf{x}_l\coloneqq (\mathbf{x}_l^p,\mathbf{x}_l^r,\mathbf{x}_l^a,\mathbf{x}_l^d)$, where $\mathbf{x}_l^p \coloneqq \{x_{ijt}^p\}_{i,j\in \mathcal{N}_l,t\in \mathcal{T}}$,  $\mathbf{x}_l^r \coloneqq \{x_{ijt}^r\}_{i,j\in \mathcal{N}_l,t\in \mathcal{T}}$, $\mathbf{x}_l^a \coloneqq \{x_{it}^a\}_{i\in \mathcal{N}_l,t\in \mathcal{T}}$ and $\mathbf{x}_l^a \coloneqq \{x_{it}^d\}_{i\in \mathcal{N}_l,t\in \mathcal{T}}$. We represent an ordered list of the fleet size in region $l$ in the time interval $\mathcal{T}$ as $\mathbf{V}_l\coloneqq [V_{lt}]_{t\in \mathcal{T}}$, which is used to find the rebalancing policy within region $l$ and calculated using the rebalancing policy optimized within the parent of the region $l$, as follows.
\begin{align}
    & V_{lt} = s_{lt} + \sum_{j\in\mathcal{N}} \left(x^r_{jlt-\tau_{jl}}+x^p_{jlt-\tau_{jl}}\right), t \in \mathcal{T}, \label{eq:fleetsize}\\
    & s_{lt} =
    \begin{cases}
        s_{l0} & t = 1 \\
        x^a_{lt}-x^d_{lt} & t > 1 \\
    \end{cases}
    , t \in \mathcal{T}.\nonumber
\end{align}
We introduce two operators for the set of regions $\mathcal{N}_l$. Demand($\mathcal{N}_l$) returns the number of customer requests $\boldsymbol{\lambda}_l$ within $\mathcal{N}_l$. Rebalancing($\mathcal{N}_l,\boldsymbol{\tau}_l,\mathbf{c}_l,\mathbf{V}_l,\boldsymbol{\lambda}_l$) returns the optimal rebalancing policy $\mathbf{x}_l$ through the ORP with varying fleet size. The total number of vehicles is determined by the number of vehicles in the parent region. We represent an ordered list of the total fleet size at each time as $\mathbf{V}_{max}$.

We show the nested algorithm with $K$ layers, called NERO$_K$, in Algorithm~\ref{alg:nero}. Our proposed method finds the rebalancing policy by calling NERO$_K(\mathcal{N}_1,1)$, where $\mathcal{N}_1$ is a set of regions at the top layer. At line 7, the policies at each layer are recursively optimized using the fleet size and the demand, which are respectively computed in line 4 or 5, and line 6. It is straightforward to convert the set of the rebalancing policies $\mathbf{x} \coloneqq \{\mathbf{x}_l\}_{l\in \mathcal{N}_l}$ returned by NERO$_K(\mathcal{N}_1,1)$ to the rebalancing policy for the ORP at the finest mesh size.

\begin{algorithm}[t]
\small
\caption{\small $\textrm{NERO}_K(\mathcal{N},k)$}\label{alg:nero}
\begin{algorithmic}[1]
\Require{set of region $\mathcal{N}$, index of layer $k$}
\Ensure{set of rebalancing policies $\mathbf{x}$}
\State{$\mathbf{x} \leftarrow \emptyset $}
\For {$l$ in $\mathcal{N}$}
    \State{$\mathcal{N}_l \leftarrow \textrm{Children}(l)$}
    \State{\algorithmicif\ $k=1$\ \algorithmicthen\ $\mathbf{V}_l \leftarrow \mathbf{V}_{max}$\ }
    \State{\algorithmicelse\ $\mathbf{V}_l \leftarrow \textrm{Computed using Eq.~\eqref{eq:fleetsize}}$}
    \State{$\boldsymbol{\lambda}_l \leftarrow \textrm{Demand}(\mathcal{N}_l)$}
    \State{ $(\mathbf{x}^r_l, \mathbf{x}^p_l, \mathbf{x}^a_l, \mathbf{x}^d_l) \leftarrow \textrm{Rebalancing}(\mathcal{N}_l,\boldsymbol{\tau}_l,\mathbf{c}_l,\mathbf{V}_l,\boldsymbol{\lambda}_l)$}
    \State{$\mathbf{x} \leftarrow \mathbf{x} \cup \{(\mathbf{x}^r_l, \mathbf{x}^p_l, \mathbf{x}^a_l, \mathbf{x}^d_l)\}$}
    \State{\algorithmicif\ $k \leq K$\ \algorithmicthen\ $\mathbf{x} \leftarrow \mathbf{x} \cup \textrm{NERO}_K(\mathcal{N}_l,k+1)$ }
\EndFor
\State{$\textrm{return}~ \mathbf{x}$}
\end{algorithmic}
\vspace{-1mm}
\end{algorithm}

\paragraph{Computational complexity}
We discuss the computational complexity of NERO$_K$. We introduce the following assumption to make the discussion easy, albeit NERO can handle any segmented region:
\begin{assumption}\label{asmpt:segmentation}
The number of children of any region is $M$.
\end{assumption}
The assumption means that each region is segmented into $M$ regions in its lower layers, except for the top layer. We introduce the following proposition and two corollaries for determining the complexity of NERO under the assumption.
\begin{proposition}\label{prop:complexity_NERO1}
Consider the number of regions in the bottom layer to be $N$. The computational complexity of NERO$_K$ is
\begin{align*}
& O\left(C\left(\frac{NT}{M^{K-1}}+2T, \frac{N^2T}{M^{2(K-1)}}+\frac{2NT}{M^{K-1}}\right)\right.\\
&\qquad\quad \left.+~C\left(MT+2T,M^2T+2MT\right)\frac{NM(M^{K-1}-1)}{M^{K-1}(M-1)}\right).
\end{align*}
\end{proposition}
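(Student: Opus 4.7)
The plan is to turn the claim into a book-keeping exercise: count how many Rebalancing subroutines Algorithm~\ref{alg:nero} invokes and what size each has, then multiply by the per-call complexity already established in the corollary for the ORP with time-varying fleet size (which gives $C(|\mathcal{N}|T+2T,\, |\mathcal{N}|^2 T+2|\mathcal{N}|T)$ arithmetic operations for an instance with $|\mathcal{N}|$ regions). The only structural input is Assumption~\ref{asmpt:segmentation}, which forces the recursion to live on a tree with branching factor $M$ at every non-root node, so the layer sizes are rigidly determined by $N$ and $K$.

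First I would isolate the single top-level call NERO$_K(\mathcal{N}_1,1)$. Because the $N$ bottom-layer regions are produced by $K-1$ successive $M$-fold refinements, the top-layer argument $\mathcal{N}_1$ processed at $k=1$ has exactly $N/M^{K-1}$ regions. Substituting $|\mathcal{N}|=N/M^{K-1}$ into the per-call formula produces exactly the first $C(\cdot,\cdot)$ summand in the statement, namely $C\bigl(NT/M^{K-1}+2T,\, N^2T/M^{2(K-1)}+2NT/M^{K-1}\bigr)$.

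Next I would enumerate the Rebalancing calls generated by the recursion at depths $k=2,\ldots,K$. Every such call corresponds to some non-top parent region, whose children form a set of size exactly $M$ by the assumption; consequently the Rebalancing invoked there solves an ORP-with-time-varying-fleet-size instance on $M$ regions, costing $C(MT+2T,\, M^2T+2MT)$ each and giving the per-call factor of the second summand. The remaining quantity to control is the total number of these invocations across the recursive tree; since a depth-$k$ node has $M$ depth-$(k+1)$ children, the count at successive depths forms a geometric progression in $M$. Summing this progression across the $K-1$ layers and rewriting the closed form of $\sum_{k=2}^{K} (\cdot)M^{k-2}$ as a geometric series yields the coefficient $\frac{NM(M^{K-1}-1)}{M^{K-1}(M-1)}$ appearing in the proposition; wrapping the two contributions in $O(\cdot)$ completes the bound.

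The main care point is purely book-keeping: one has to line up the proposition's layer index with Algorithm~\ref{alg:nero}'s recursion-depth index, separate the root call (of size $N/M^{K-1}$) from the uniform $M$-region calls that dominate below it, and verify that the boundaries of the geometric sum produce exactly the stated coefficient rather than a sibling expression off by factors of $M$ or $M-1$. Once this identification is pinned down, no further estimates are required: the bound follows by an appeal to the per-call corollary plus a single geometric-series computation.
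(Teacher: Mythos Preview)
Your proposal is correct and mirrors the paper's own proof: both isolate the single top-layer call on $N/M^{K-1}$ regions to produce the first $C(\cdot,\cdot)$ term, then count the $M$-region Rebalancing calls at layers $k=2,\ldots,K$ (the paper writes the per-layer count as $N/M^{K-k}$, which is your $M^{k-2}$ progression up to the constant factor $N/M^{K-2}$) and sum the resulting geometric series to obtain the coefficient $\frac{NM(M^{K-1}-1)}{M^{K-1}(M-1)}$. The only caution is the one you already flag---lining up the recursion-depth index with the layer index so the sum boundaries come out right---and the paper handles it in exactly the same bookkeeping fashion.
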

\begin{proof}
The number of the regions in the top layer is $N/M^{K-1}$ because of Assumption~\ref{asmpt:segmentation}. Thus, the computational complexity of optimization in the top layer is
\begin{align*}
O\left(C\left(\frac{N}{M^{K-1}}+2T, \frac{N^2T}{M^{2(K-1)}}+\frac{2NT}{M^{K-1}}\right)\right).
\end{align*}
The number of regions in layer $k$ is $N/M^{K-k}$, and the number of regions within each region in the layer is $M$ according to Assumption~\ref{asmpt:segmentation}. Thus, the computational complexity of optimization in layer $k$ is $O(C_{M})N/M^{K-k}$, where $C_{M}\coloneqq C(MT+2T, M^2T+2MT)$. Summing up the computational complexity from $k=2$ to $K$ is
\begin{align*}
\sum_{k=2}^{K} \frac{O(C_M)N}{M^{K-k}} & = \frac{NM(M^{K-1}-1)}{M^{K-1}(M-1)}O(C_M).
\end{align*}
Because the computational complexity of NERO$_K$ is the sum of the top layer complexity and the total complexity, the proposition is derived.
\end{proof}

We can naturally derive the following corollary for the convergence of the computational complexity.
\begin{corollary}\label{prop:complexity_NERO3}
The computational complexity of NERO$_K$ converges to $O\left(C\left(MT+2T,M^2T+2MT\right)\frac{NM}{(M-1)}\right)$ with increasing the number of layers.
\end{corollary}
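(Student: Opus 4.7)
The plan is to take the limit $K\to\infty$ in the explicit expression from Proposition~\ref{prop:complexity_NERO1}, with $N$, $M\ge 2$, and $T$ held fixed. The complexity estimate there is a sum of two parts: the ``top-layer'' cost $C\bigl(NT/M^{K-1}+2T,\; N^2T/M^{2(K-1)}+2NT/M^{K-1}\bigr)$ and the aggregated cost over all deeper layers. It suffices to show separately that the top-layer term becomes negligible while the deeper-layer term converges to the stated expression.

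First I would rewrite the geometric coefficient arising in the deeper-layer term as
\[
    \frac{NM(M^{K-1}-1)}{M^{K-1}(M-1)} \;=\; \frac{NM}{M-1}\bigl(1 - M^{-(K-1)}\bigr).
\]
Since $M\ge 2$, the factor $M^{-(K-1)}$ tends monotonically to $0$ as $K\to\infty$, so this coefficient increases monotonically to $\frac{NM}{M-1}$. Because the multiplier $C(MT+2T,\,M^2T+2MT)$ does not depend on $K$, the deeper-layer term converges to exactly $O\!\left(C(MT+2T,\,M^2T+2MT)\,\frac{NM}{M-1}\right)$, which matches the claim.

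Finally I would argue that the top-layer term is dominated. As $K\to\infty$, each of $NT/M^{K-1}$, $N^2T/M^{2(K-1)}$, and $2NT/M^{K-1}$ vanishes, so the arguments of the outer $C$ approach $(2T,\,0)$, giving a bound that depends only on $T$. Since the deeper-layer term grows linearly in $N$, the top-layer contribution is absorbed into the big-$O$. The main (and really only) subtle point is justifying this domination for an abstract oracle $C$: one needs mild monotonicity of $C$ in each argument. Under the concrete Vaidya instantiation $C(m,n)=O(m^{1.5}nL)$ used throughout the paper this is immediate, since the top-layer cost reduces to $O(T^{2.5}L)$, a quantity independent of $N$ and trivially absorbed. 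Combining the two limits yields the stated corollary.
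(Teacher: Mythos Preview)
Your argument is correct and is exactly the route the paper intends: the text offers no explicit proof, stating only that the corollary is ``naturally derive[d]'' from Proposition~\ref{prop:complexity_NERO1}, i.e., by letting $K\to\infty$ so that the geometric coefficient tends to $\frac{NM}{M-1}$ while the top-layer term degenerates. Your remark about needing monotonicity of $C$ (or falling back on the Vaidya instantiation) to absorb the top-layer term is a fair caveat that the paper leaves implicit.
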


We also derives the computational complexity of NERO$_k$ with Vaidya method by using $C(m,n)=m^{1.5}nL$.
\begin{corollary}\label{prop:complexity_NERO2}
The computational complexity of NERO$_K$ with the Vaidya method is $O\left(\frac{N^{3.5}T^{2.5}L}{M^{3.5(K-1)}}\right)$ for $N$ in the bottom layer and is $O\left(\frac{NM^{4.5}T^{2.5}L}{(M-1)}\right)$ for $M$.
\end{corollary}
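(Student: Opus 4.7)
The plan is to start from Proposition~\ref{prop:complexity_NERO1} and substitute the Vaidya cost $C(m,n)=O(m^{1.5}nL)$ into each of the two additive terms, then isolate the dominant contributions in the two regimes---large $N$ and large $M$---stated in the corollary.

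First I would handle the top-layer term $C(NT/M^{K-1}+2T,\ N^2T/M^{2(K-1)}+2NT/M^{K-1})$. Within the $O(\cdot)$ the additive constants $2T$ and $2NT/M^{K-1}$ are dominated by the leading monomials, so the top-layer cost reduces to $(NT/M^{K-1})^{1.5}(N^2T/M^{2(K-1)})L = N^{3.5}T^{2.5}L/M^{3.5(K-1)}$, which is the first of the two bounds.

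Second I would handle the sum term $C(MT+2T,\ M^2T+2MT)\cdot NM(M^{K-1}-1)/(M^{K-1}(M-1))$. The inner $C$ becomes $O(M^{3.5}T^{2.5}L)$ because the leading $M$-factors dominate the additive constants after Vaidya's substitution, and the combinatorial prefactor satisfies $(M^{K-1}-1)/M^{K-1}\le 1$, giving an overall bound of $O(NM^{4.5}T^{2.5}L/(M-1))$, the second of the two bounds.

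Finally, the total complexity of NERO$_K$ is the sum of these two contributions, and I would read the corollary as two asymptotic statements: fixing $M$ and $K$ while letting $N$ grow, the top-layer term dominates and yields the first bound; fixing $N$ and $K$ while letting $M$ grow, the recursive per-region term dominates and yields the second. The proof is essentially a substitution followed by asymptotic simplification, so no single step is technically difficult; the main pitfall is notational, namely being careful to interpret ``for $N$'' and ``for $M$'' as identifying which variable is taken to be asymptotically large, and verifying that the $2T$ and $2MT$ constants indeed do not alter the leading order under Vaidya's cost.
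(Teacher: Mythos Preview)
Your proposal is correct and follows exactly the approach the paper intends: the corollary is obtained simply by substituting Vaidya's cost $C(m,n)=O(m^{1.5}nL)$ into the two terms of Proposition~\ref{prop:complexity_NERO1} and keeping the leading monomials. The paper does not spell out the asymptotic simplifications you give, but your computations of the top-layer term $N^{3.5}T^{2.5}L/M^{3.5(K-1)}$ and the recursive term $NM^{4.5}T^{2.5}L/(M-1)$ are precisely what the substitution yields, and your reading of ``for $N$'' versus ``for $M$'' as identifying the asymptotic variable is the intended one.
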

According to the proposition, the complexity of NERO$_k$ decreases exponentially as the number of layers increases and $M^{3.5(K-1)}$ times lower than that of SRO. Therefore, NERO can be used to obtain the rebalancing policy even for large-scale MoD services by setting a suitable number of layers.

\paragraph{Additional rebalancing time of NERO}

Vehicles optimized by NERO need more travel time for rebalancing, called the \emph{rebalancing time} hereafter, than ones by SRO because the rebalancing is optimized using longer travel time to trip to other areas at upper layers. Figure~\ref{fig:travel-distance} shows the difference in the travel distance used in optimization between SRO and NERO. The red arrow and the blue arrow, respectively, denote a trip to the center point within regions and a trip across regions. The rebalancing policy using NERO estimates extra travel time to trip to the customer. The apparent extra travel distance is reflected as the additional waiting time.

We introduce the following assumption to make the discussion easy, albeit NERO can handle any segmented region:
\begin{assumption}\label{asmpt:region_size}
Regions in each layer are segmented with the same size mesh uniformly.
\end{assumption}
We show the following proposition related to the increase owing to the use of NERO under Assumptions~\ref{asmpt:segmentation} and \ref{asmpt:region_size}.
\begin{proposition}\label{prop:travel_time_loss1}
Let $\tau$ be the time to travel the mesh size distance of a region at the top layer. Then, the rebalancing time per vehicle increases at most by $\frac{M}{(M-1)}\tau$.
\end{proposition}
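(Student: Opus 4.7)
The plan is to bound the extra rebalancing travel time layer by layer and then sum the contributions as a geometric series whose total collapses to $\tfrac{M}{M-1}\tau$. First I would unpack Assumption~\ref{asmpt:region_size}: because regions at each layer are uniformly segmented and, by Assumption~\ref{asmpt:segmentation}, each parent contains exactly $M$ children, the mesh travel time at layer $k$, call it $\tau_k$, satisfies $\tau_1=\tau$ and $\tau_{k+1}=\tau_k/M$. Unrolling the recursion gives $\tau_k=\tau\,M^{-(k-1)}$, which is the key geometric ingredient.

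Next I would compare, trip by trip, the rebalancing solution produced by NERO to the one produced by SRO. NERO commits to its routing hierarchically: at layer~1 it only decides how many vehicles cross between top-level regions, at layer~2 it refines this among the $M$ children of each top-level region, and so on. At layer~$k$ the optimization treats all fine-grained sources and sinks inside a single layer-$k$ region as interchangeable, so the origin (resp.\ destination) actually assigned to a vehicle can differ from the SRO-optimal fine-grained origin (resp.\ destination) by at most one layer-$k$ mesh. Hence that layer contributes an overhead of at most $\tau_k$ to the vehicle's rebalancing time, independently of what the other layers commit to.

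Summing these per-layer overheads, the total extra rebalancing time per vehicle is
\[
\Delta\tau \;\le\; \sum_{k=1}^{K}\tau_k \;\le\; \sum_{k=1}^{\infty} \tau\,M^{-(k-1)} \;=\; \frac{M}{M-1}\tau,
\]
which is the claimed bound. Note that the bound is uniform in $K$ precisely because the geometric reduction in $\tau_k$ guarantees convergence of the series.

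The main obstacle will be rigorously justifying the ``at most $\tau_k$ per layer'' step. This requires tracing the flow-conservation constraints \eqref{eq:cst-32}--\eqref{eq:cst-36} through a recursive call of NERO and showing that, after fixing the layer-$k$ boundary departures $\mathbf{x}^d$ and arrivals $\mathbf{x}^a$, the residual layer-$(k{+}1)$ problem can always be realized by perturbing the SRO solution by at most one mesh diameter at that layer. In particular, one must check that the errors across layers simply add rather than compound, which relies on the fact that each layer's optimization only redistributes vehicles within a single parent region and never revisits decisions made above it.
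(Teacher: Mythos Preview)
Your plan is essentially the paper's proof: derive $\tau_k=\tau M^{-(k-1)}$ from Assumptions~\ref{asmpt:segmentation} and~\ref{asmpt:region_size}, bound the overhead at layer~$k$ by $\tau_k$, and sum the geometric series to $\tfrac{M}{M-1}\tau$. The one device the paper adds, which resolves the obstacle you flag, is an exchange argument: instead of tracking how NERO's assignment drifts from SRO's layer by layer, first \emph{freeze} the vehicle--destination assignment to SRO's and bound the hierarchical-routing detour by $\sum_k \tau_k$; then observe that, since NERO optimizes at each layer, its actual cost can only be at most that of this fixed-assignment surrogate, so the same bound carries over even when the assignments differ.
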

\begin{proof}
We first discuss the case when the assignment between vehicles and their destinations is not changed by NERO from SRO. At the top layer, a vehicle travels for the rebalancing at most $\tau$ longer. A vehicle may lose $\tau/M$ at the second layer and do $\tau/M^2$ at the third layer. Generally, we may lose $\tau/M^{(k-1)}$ at $k$-th layer, where $k>1$. Summing up these time over $k$, we obtain the total travel time as $\tau+\tau\cdot\sum_{k=1}^\infty 1/M^k=\tau+\tau\cdot(\frac{M}{M-1}-1)=\frac{M}{M-1}\tau$. Even if the assignment is changed, the upper bound is satisfied because the identical assignment worsen the travel time compared to the changed assignment and $\frac{M}{M-1}\tau$ bounds its travel time from the top.
\end{proof}
The proposition claims that the upper bound of the increase in the rebalancing time optimized by NERO is in proportion to the mesh size of the top layer. 

\begin{figure}[t]
 \centering
  \includegraphics[width=1.0\hsize]{./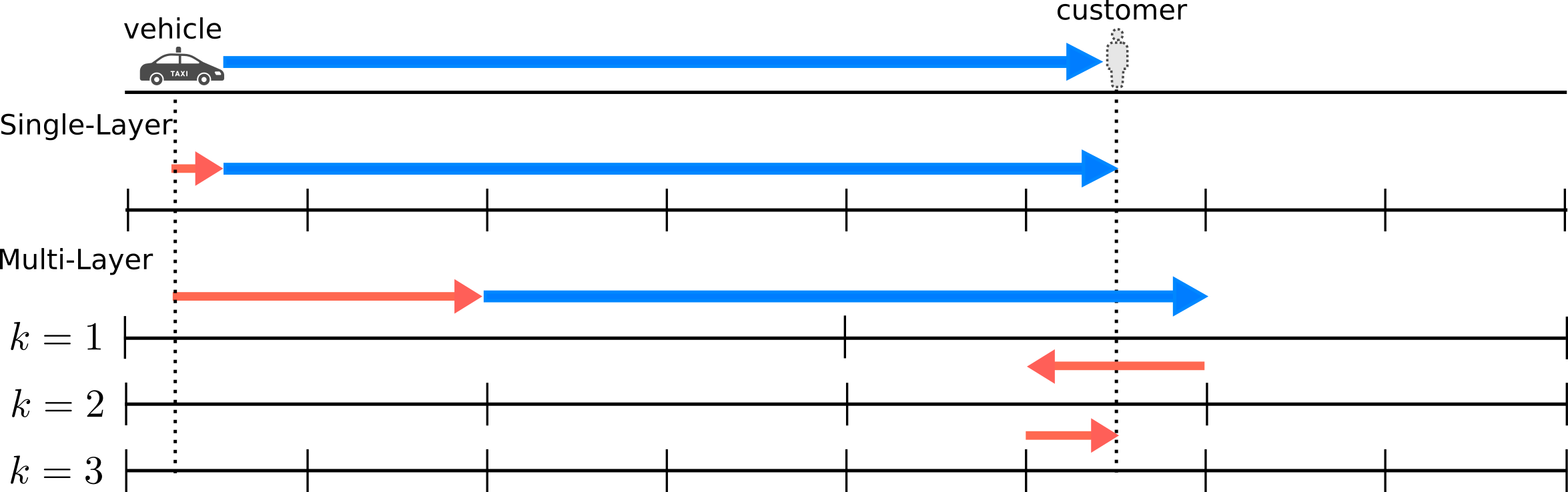}
 \caption{\small Policies optimized by SRO and NERO with three layers. The red arrow and the blue arrow, respectively, denote travel to the center point within regions and travel across regions.}\label{fig:travel-distance}
 \vspace{-4mm}
\end{figure}

\section{Experiments}

We evaluate NERO by using an open dataset of taxi trips in Manhattan, New York City, USA~\cite{nytaxi}.

\subsection{Dataset}

The dataset used herein consisted of the data of 230,620 trips requested by customers over 24 h on May 3, 2016. For each trip, the dataset contains locations (latitude and longitude) of origin and destination, and time of a request. The maximum and the minimum number of customer requests were 16,000 at 8 pm and 1,200 at 3 pm respectively. We used five mesh sizes to divide the area uniformly into regions for optimization: 250-m, 500-m, 1-km, 2-km and 4-km mesh. We obtained graphs of the segmented regions based on the road network of Manhattan from Openstreetmap~\cite{osm} because vehicles might not directly travel from a region to next regions in the mesh due to some reasons such as a park.

\subsection{Experimental Setup}

We optimized the rebalancing at intervals of an hour independently and evaluated the mean of the computational time and the rebalancing time per vehicle of 24 experiments. We assumed the average vehicle speed $v_{avg}$ as 5.5 m/s and thus the travel time to an adjacent region $\tau$ is calculated by $\Delta/v_{avg}$, where $\Delta$ is the mesh size. For example, $\tau$ is three minutes in case of the 500-m mesh. The travel time $\tau_{ij}$ was also calculated with $D_{ij}/v_{avg}$, where $D_{ij}$ is $L_1$-distance from region $i$ to region $j$.

We used $\tau$ of the bottom layer as the length of a time step for ORPs, that is, the number of the time steps $T$ in $\mathcal{T}$ is $60/\tau$ because the length of the intervals was an hour. For example, $T$ was 40 when using 250-m mesh at the bottom layer. The number of regions $N$ in the mesh was 261. We set $\tau_{ii}$ to one. The rebalancing costs $c_{ij}$ were found to be proportional to the travel time $\tau_{ij}$, except for $c_{ii}$, which was set to zero. $N$ and $\tau$ of the other meshes are listed in Table~\ref{tab:parameters}. We calculated NERO on a PC equipped with an Intel Xeon CPU Broadwell@2.6 GHz and 112 GB memory. We optimized the rebalancing policy by calling an LP solver included in Gurobi-8.5~(\citeyear{Gurobi}) with default settings from a Python code.

\begin{table}[htbp]
 \centering
 \caption{\small Number of regions and travel time to an adjacent region} \label{tab:parameters}
 \begin{tabular}{cccccc}
     \toprule
     & 250-m mesh& 500 m & 1 km & 2 km & 4km\\
     \midrule
     \midrule
    $N$ & 867 & 261 & 79& 26 & 10\\
    $\tau$ & 1.5 min. & 3 & 6 & 12 & 24\\
    \bottomrule
\end{tabular}
\vspace{-2mm}
\end{table}

In the experiments, we compared NERO to SRO for the computational time and the rebalancing time but not the customer waiting time because of the followings. When the minimum mesh sizes are common in the methods, NERO always realizes the same waiting time as SRO due to the constraints~\eqref{eq:cst-21} or the constraints~\eqref{eq:cst-31}. Iglesias et al.~(\citeyear{Iglesias2018}) already shown that the waiting time of SRO was much shorter than the other conventional methods (e.g., Pavone et al.~(\citeyear{Pavone2011})) in their previous work. Therefore,  we only need to evaluate NERO for the computational time and the rebalancing time compared to SRO because NERO also obviously achieves the same or much shorter waiting time than the previous methods.

We compared the NEROs with several layers to SRO. We evaluated our method with two minimum mesh sizes: 250 m and 500 m. Table~\ref{tab:method} shows the settings of the methods. We set the length of each time step to 3 minutes when the minimum mesh size was 500 m and to 1.5 minutes when it was 250 m. NERO$_l^m$ means that NERO contains $l$ layers and its minimum mesh size is $m$. SRO$^{m}$ is SRO with a mesh size of $m$.

Fleet size is necessary to optimize the rebalancing policy. We decided fleet size based on the dataset. We used a fleet size contained 110\% of the number of unique transported vehicles in each time interval. The maximum and minimum fleet size were 5,900 at 7 pm and 1,100 at 3 am respectively.

\begin{table}[htbp]
 \centering
 \caption{\small Settings of methods compared herein} \label{tab:method}
 \small
 \begin{tabular}{c|ccccc}
     \toprule
     method& 1st-layer & 2nd & 3rd & 4th & 5th\\
     \midrule
     \midrule
    SRO$^{500}$ & 500-m mesh & & & &\\
    NERO$_2^{500}$ &  1 km & 500 m & & &\\
    NERO$_3^{500}$ &  2 km& 1 km & 500 m & & \\
    NERO$_4^{500}$ &  4km & 2 km& 1 km & 500 m & \\
     \midrule
    SRO$^{250}$ &  250-m & & & &\\
    NERO$_2^{250}$  & 500 m & 250 m & & &\\
    NERO$_3^{250}$  & 1 km& 500 m & 250 m  & &\\
    NERO$_4^{250}$  & 2km & 1 km& 500 m & 250 m  &\\
    NERO$_5^{250}$  & 4km & 2km &1 km& 500 m & 250 m\\
    \bottomrule
\end{tabular}
\vspace{-4mm}
\end{table}

\subsection{Results}
\paragraph{Computational complexity}

We evaluated the computational time with respect to the number of layers. We could not receive any feasible solutions when setting the time limit to the computational time of NERO$_4^{250}$ (i.e., 10.4 seconds) to optimize SRO. The total computational time and the optimization time of each method are shown in Fig.~\ref{fig:result1} (a): the blue line and the red dash-line represent the computational time of the methods with 250-m mesh and of the one with 500-m mesh as the minimum mesh. NERO$_3^{500}$ shortened the computational time by 158 seconds compared to SRO$^{500}$, and NERO$_4^{250}$ did the time by more than 5,400 seconds. That is, both NERO$_4^{250}$ and NERO$_4^{500}$ decreased the computational time by more than 98\% compared to SRO$^{250}$ and SRO$^{500}$, respectively.

We confirmed that the computational time decreased exponentially until three layers and the decrease was saturated after four layers, as expected in Proposition~\ref{prop:complexity_NERO1} and \ref{prop:complexity_NERO3}. The slight increase after three layers is also explainable by discussing which terms of the complexity in Proposition~\ref{prop:complexity_NERO1} are dominant. That is, the complexity would decrease exponentially with the number of layers when the first term is dominant. On the other hand, the complexity would increase and then saturate with increasing the number of layers when the second term is dominant.

\paragraph{Rebalancing time}
We evaluated the rebalancing time per vehicle. The rebalancing time ratio for the total service time, which is 60 minutes in the experiments, of each method is shown in Fig.~\ref{fig:result1} (b). The ratio was computed using $\sum_{k=0}^K\sum_{\{N_l\}_k} \sum_{i,j \in \mathcal{N}_l,t\in \mathcal{T}} (x^r_{ijt}\tau_{ij} + x^a_{it}\tau_k/2)/60$, where $\{N_l\}_k$ is the set of the sets of regions at layer $k$ and $\tau_k$ is the mesh size of a region at layer $k$. $\tau_k/2$ means the travel time from a border to the center in a region at layer $k$. We confirmed that the rebalancing time monotonically increases for the mesh size at the top layer, as expected according to Proposition~\ref{prop:travel_time_loss1}. For example, the policy optimized by NERO$_3^{250}$ spent 5.4~$\%$ for the rebalancing in the service time (i.e., three minutes in an hour) while the policy using SRO$^{250}$ did 0.4~$\%$. However, we found that the increase is much slower than the linear with respect to the mesh size at the top layer. This would be because most trips, in reality, are the much shorter distance than the assumption of Proposition~\ref{prop:travel_time_loss1}, where all trips travel across the regions at the top layer.

\begin{figure}[t]
 \centering
  \begin{subfigure}{1.0\hsize}
  \centering
  \includegraphics[width=1.0\hsize]{./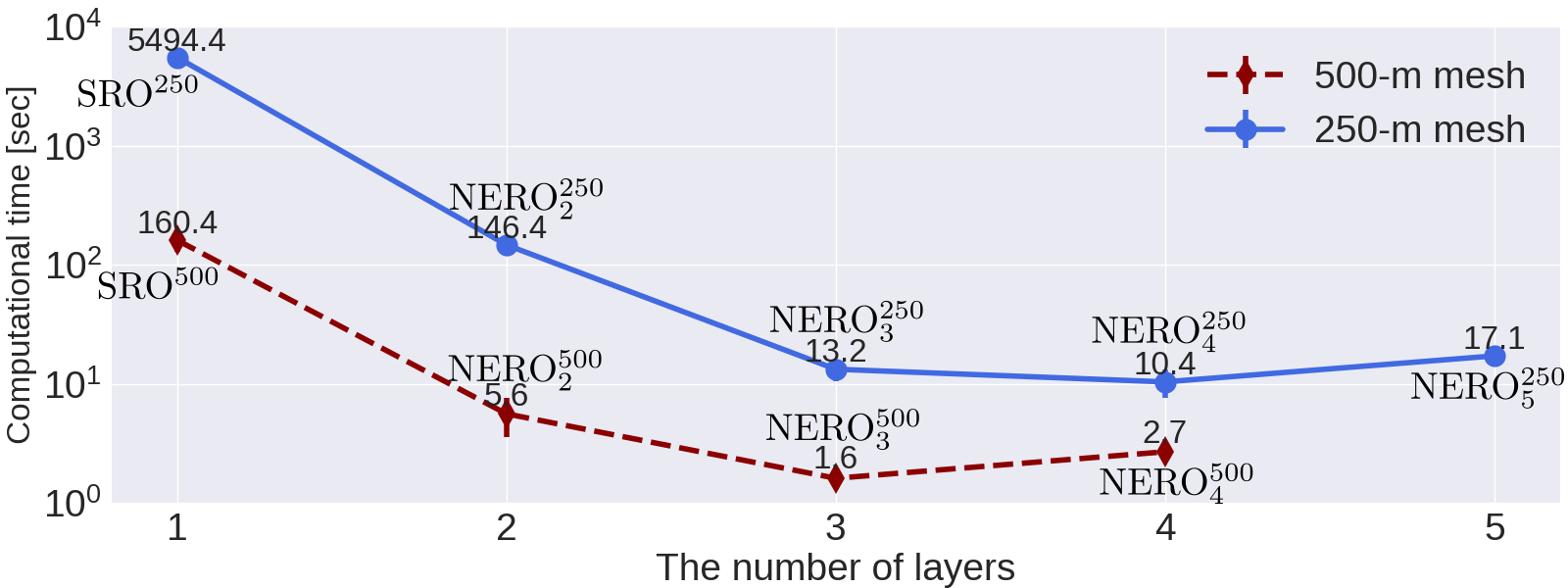}
  \caption{\small Computational time}
 \end{subfigure}
 \begin{subfigure}{1.0\hsize}
  \centering
  \includegraphics[width=1.0\hsize]{./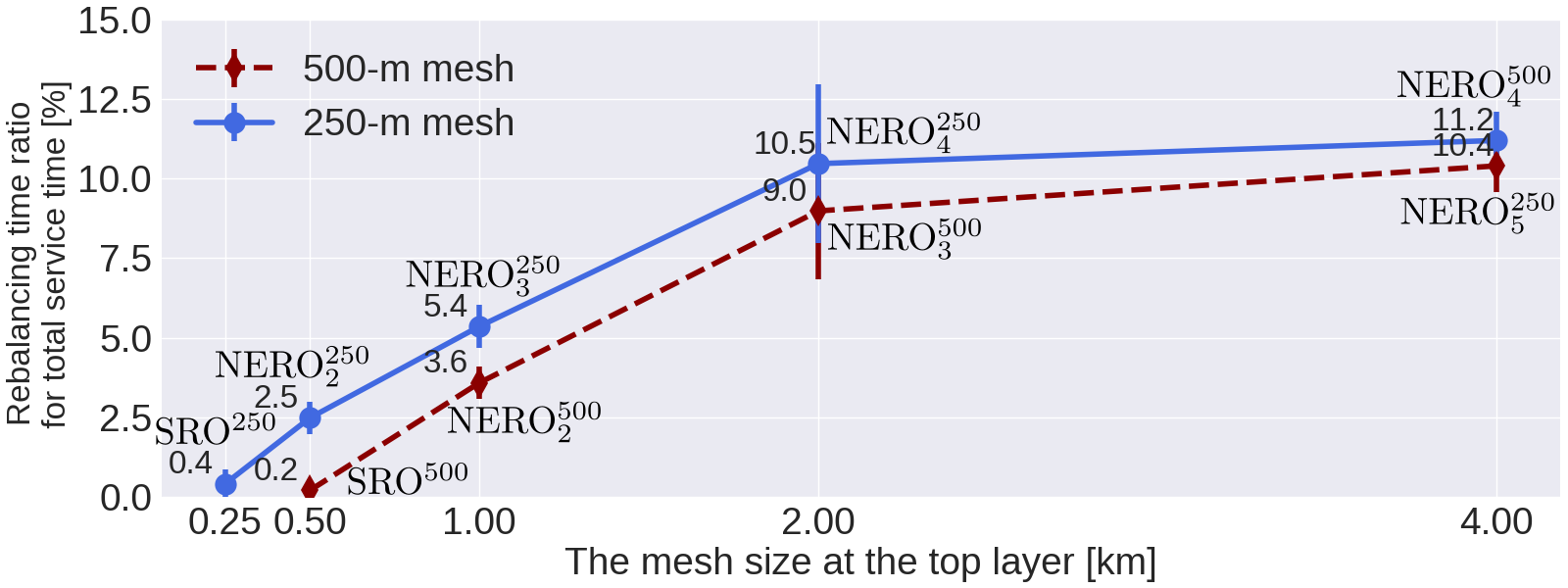}
  \caption{\small Rebalancing time ratio for the total service time}
 \end{subfigure}
 \vspace{-2mm}
\caption{\small Computational time (a) and rebalancing time ratio for the total service time per vehicle (b). The blue line and the red dash-line are the values of the methods with 250-m mesh and 500-m mesh as the minimum mesh respectively. The error bars denote the standard deviation of the values. }\label{fig:result1}
\end{figure}

\section{Concluding remarks}

In this paper, we presented a scalable approach to optimize vehicle scheduling in large-scale MoD services with a number of regions. We first developed the time-expanded network flow model with a predetermined time-varying fleet size: the model allowed us to change the total number of vehicles with the time, whereas the conventional model constrains us to fix the total number. We presented an algorithm called NERO to optimize vehicle scheduling and routing in stages from coarse regions to fine regions by using structured regions hierarchically. We also theoretically analyzed the computational complexity and the extra travel time for the rebalancing of the proposed method under an assumption about region segmentation. We proved that the computational complexity decreased exponentially with increasing number of layers. We also derived the increase in the travel time of NERO in proportion to the mesh size at the top layer theoretically.

We numerically evaluated our algorithm using a real-world taxi trip dataset. We confirmed that our hierarchical algorithm can compute the solution of ORPs significantly faster than the single layer method by spending the additional rebalancing time. For instance, NERO$^{250}_{3}$ achieved 98$\%$ reduction of the computational time by using just 5.4 $\%$ in the service time as the extra rebalancing time. Our method can balance between the computational time and the extra rebalancing time by tuning the number of layers. As a future work, we will consider extending NERO to handle predicted demand in travel times.
\bibliographystyle{aaai}
\bibliography{references}

\end{document}